\newcommand{\E}{\mathbb E}
\newcommand{\R}{\mathbb R}
\newcommand{\D}{\mathcal D}
\newcommand{\F}{\mathcal F}
\newcommand{\I}{\mathcal I}
\newcommand{\G}{\mathcal G}
\newcommand{\U}{\mathcal U}
\newcommand{\ud}{\mathrm{d}}
\newcommand{\vd}{\mathrm{d}}
\def\P{{\mathbb P}}
\newcommand{\Var}{\mathop{\mathrm{Var}}\nolimits}
\numberwithin{equation}{section}
\theoremstyle{plain}
\newenvironment{example}[1][Example]{\begin{trivlist}
\item[\hskip \labelsep {\bf Example}]}{\end{trivlist}}
\newtheorem{theorem}{Theorem}[section]
\newtheorem{proposition}[theorem]{Proposition}
\newtheorem{remark}[theorem]{Remark}
\theoremstyle{definition}
\theoremstyle{example}
\begin{document}

\title{A SEQUENTIAL ESTIMATION PROBLEM WITH CONTROL AND DISCRETIONARY STOPPING} 

\author{\textsc{Erik Ekstr\"om\,}\thanks{\,Department of Mathematics, Uppsala University, Box 256, 75105 Uppsala, Sweden \mbox{(email: 
{\it ekstrom@math.uu.se}).} Support from the Swedish Research Council under grant 2019-03525
is gratefully acknowledged.} \hspace{0.5mm}
and \textsc{ Ioannis Karatzas\,}\thanks{
\, Departments of Mathematics and Statistics, Columbia University, 2990 Broadway, New York, NY 10027, USA \mbox{(email: 
{\it ik1@columbia.edu})}.   
Support from the National Science Foundation under grant NSF-DMS-20-04977 is gratefully acknowledged.
}}

\maketitle

{\centering\small
{\it Dedicated to Professor Alain Bensoussan on the occasion of his 80th birthday.} \par}

\medskip

\begin{abstract}

\smallskip
\noindent
We show that ``full-bang" control is optimal in a problem which combines features of  (i) sequential least-squares  {\it estimation} with Bayesian updating, for a random quantity observed in a bath of white noise; (ii)  bounded  {\it control} of the rate at which observations are received, with a superquadratic cost per unit time; and (iii) ``fast" discretionary  {\it stopping}. We  develop also the optimal filtering and stopping rules in this context.

\end{abstract}

\smallskip
\smallskip
\noindent
\textit{MSC 2020 Subject Classification:} primary 62L12; secondary 60G35, 62L15, 93E11.

\noindent
\textit{Keywords:}  sequential analysis,  filtering,  optimal stopping, stochastic control,  bold play


\medskip

\section{Introduction and Summary}
\label{sec1}

Consider    trying to estimate a quantity about which there is   uncertainty, and which   cannot be   observed  directly.   We have instead access to a stream of observations that this   quantity affects  and, based on this stream,    try to find an  estimator of the unobservable quantity which is ``optimal" in the sense of least-squares. Access to  the stream of observations is in   our control,  though costly: we can decide  at any given time $t$ the rate $u(t) \in (0,1]$ at which we receive it, but have to    pay  a   positive
cost $ h (u(t))$ per unit of time  for as long as we keep observing. We can choose also the termination time $\tau$ of the experiment. {\it How is this triple problem, of sequential estimation (filtering), optimal control, and stopping, to be    resolved in a way that  balances the conflicting requirements of fidelity in estimation  and of cost minimization?}  

\smallskip
We study here   a   stylized form of this question in a Bayesian setting. We assume that the unobservable quantity is a   random
variable $X$ with known  ``prior" distribution, and that we observe  sequentially 
 the   process 
 \begin{IEEEeqnarray}{rCl} 
 \label{E:Y}
 Y(t)=X \int^t_0 u(s)\,\ud s + W(t)\,, \qquad 0 \le t < \infty\,.
 \end{IEEEeqnarray} 
 
 \bigskip
 \newpage
 \noindent
  Here  $W(\cdot)= \big( W(t)\big)_{ 0 \le t < \infty}\, $   is a  standard Wiener process, independent of the random variable $X$;     
 we assume that  we know the    distribution $\,{\bm \mu}\,$ of $X$ and that it has finite, positive variance; and posit that, at
all times $t \in [0, \infty)$, 
 we have continual access to the      filtration  $\mathbb F=\big(\F(t)\big)_{0\leq t<\infty}$ generated 
by the ``observations process"
$Y(\cdot)$ in (\ref{E:Y}). The ``feedback control" process $u (\cdot) = ( u(t))_{ 0 \le t < \infty}\,$ is    adapted to this
filtration  $\mathbb F$, takes   values in $(0,1],$  and satisfies the non-degeneracy condition (\ref{2.1}) below. 
We can select also a   time $\tau$ for terminating the experiment,  in the collection $ \,\mathcal T $  of stopping times of  the 
filtration $\mathbb F$.

\smallskip
The objective then, is to find a pair $\, (\tau^*, u^* (\cdot))\,$ that minimizes the total expected cost 
\begin{equation}
\label{1.1}
\E   \Big(X-\widehat X (\tau) \Big)^2 + \,\E \int_0^\tau h \big(u(t)\big)\, \ud t  
\end{equation}
of estimation-plus-control, over pairs $\, (\tau, u (\cdot))\,$consisting of  stopping times and control policies; hopefully in a manner that leads also to a ``fastest possible" termination time. 
Here $\, \widehat X (t) = \E \big[ \, X \, \big| \, \F (t) \, \big]\,$ is the least-squares estimate of $X$ at any given time $t\in [0,\infty),$ given the observations up to that  time; and $  h   (\cdot) $ is a positive, continuous, non-decreasing function on (0,1], which measures the instantaneous cost of control and satisfies the ``super-quadratic" condition (\ref{2.8}) --- or, a bit more generally, the requirement (\ref{2.8.5}). 

{\it In such a context how ``bold", or how ``timid", should one be, when choosing the rate $u(t) \in (0,1]$ at which observations are obtained?}

\subsection{Preview}
\label{sec1.1}

 We offer a precise formulation for this problem   in  section \ref{sec2}, 
based on changes of probability measure (``weak-solution formulation")  and the \textsc{Girsanov} theorem.

 Elementary filtering theory, time-change techniques using   the \textsc{Dambis-Dubins-Schwarz} theorem, and a re-parametrization based on the ``posterior" (conditional) mean and variance, are then deployed in sections \ref{sec3},  \ref{sec4} to reduce the problem to manageable proportions --- and to describe in detail its optimal filtering and stopping rules corresponding to any given control. A bit more specifically, but still in very broad brushes and with tentative notation, the original problem is reduced to one based on the conditional (posterior) mean $\widehat X (t)$ and variance $V(t)$ across times $t \in [0,\infty)$, via the action of suitable functions $G$ and $H,$  $\Psi$ of time and space, which satisfy  suitable nonlinear partial differential equations of parabolic type; and via re-parametrization, based on the martingale $\widehat X (\cdot)$. The impact of the control is removed by deploying the time-change $\,A^u(\cdot):=\int_0^{\,\cdot} u^2(s)\,\ud s\,,$ which re-writes the first term in \eqref{1.1} as
 $$
\E   \Big(X-\widehat X (\tau)\Big)^2     =\,\E \big[ V(\tau) \big] = V(0) - \E \int_0^\tau V^2( t) \, \mathrm{d}  A^u (t) = V(0) - \E \int_0^{A^u (\tau)} \Psi^2 \big( s , Q^u (s) \big) \, \mathrm{d} s\,,
$$
   in terms of the  diffusion process   $Q^u (\cdot)$ whose distribution does not depend on the control $u(\cdot)$. The cumulative impact of observations is thus measured by $A^u(\tau)$, and the overall problem is cast as the minimization of 
$$
\E \left[\int_0^\tau\left(\frac{h(u(t))}{u^2(t)}-\Psi^2 \big(A^u(t),\widehat X (t) \big)\right) \ud A^u(t)\right]  
$$
over stopping times $\tau$ and controls $u(\cdot)$.   Finally, we show in section \ref{sec5}     that, for     cost functions $h(\cdot)$ satisfying the super-quadratic condition $h(u) / u^2 \ge h(1), ~ 0 <u \le 1$ of (\ref{2.8}), control   of ``bold play" or ``full-bang" type $\, u^* (\cdot) \equiv 1\,$ is not only optimal,  but leads also to an  optimal  termination time $\tau^*$ which is ``fastest possible": 
if $(\tau, u(\cdot))$ is another optimal pair, then 
we have the stochastic dominance
\[
\P^*(\tau^*>t) \, \leq \, \P^{\,    u }\, \big(  \tau>t\big), \qquad 0\leq t<\infty\,.
\]
In this manner, we end up with the sequential estimation problem for $X$ from observations $Y(t) = X \cdot t + W(t),~ 0 \le t < \infty\,, $ treated in detail in  \cite{EKV} and admitting  explicit solutions for  \textsc{Gauss} and \textsc{Bernoulli} prior distributions   on $X$. These are discussed in section \ref{sec6} in our present context.

The super-quadratic 
condition   (\ref{2.8}) posits that the cost rate $h \big( u(t)\big)$ of deploying control $u(t) \in (0,1]$ at time $t$, measured relative to the local rate $\frac{\ud~}{\ud t} A^u (t)= u^2 (t)$ of data acquisition, is minimal when $u(t)=1$. This condition guarantees that the optimal stopping aspect of this problem is not trivial, i.e., that we do not end up observing {\it ad infinitum} by selecting $\tau = \infty$.

\subsection{Related Work}
\label{sec1.2}

The results in this paper provide  a rare study of   problems which  combine all three features of optimal filtering, stopping and control,  yet admit  fairly explicit answers.
Related studies,   again with fairly explicit answers,  are those by \textsc{Dalang  \& Shiryaev} \cite{DS} in the context of detecting a change-point, and by \textsc{Harrison \& Sunar} \cite{HS}  in the context of investment timing with incomplete information.
 Whereas, some general theory  for such problems involving all these three features, is developed on the last pages (Chapter 4, Section 6) of  \textsc{Bensoussan \& Lions}  \cite{BL}.  A more recent contribution is \cite{ELO}, where a fraud detection game is studied.

\smallskip
Problems involving combined control and stopping  
have been studied quite extensively, starting with the  ``leavable gambling houses" of \textsc{Dubins  \& Savage} \cite{DS1}. 
They arise, for instance, in target-tracking, where one has to stay in the vicinity of a target by spending fuel, declare when one has arrived ``sufficiently close”, then decide whether to engage the target or not; in portfolio optimization with horizon chosen by the investor; and in American option valuation under constraints. The monographs  \cite{Kry}, \cite{ElK}, \cite{BL}, \cite{FS} and the papers \cite{L}, \cite{M}, \cite{KZ} contain  general theoretical results, based on partial differential equation and/or probabilistic methods. There is also a host of specific problems, of this combined control-and-stopping type,  that admit explicit solutions: we mention \cite{DZ}, \cite{KS3}, \cite{KS4}, \cite{KW}, \cite{KOWZ}, \cite{KM} and the references cited there, as representative examples. 

\smallskip
The least-squares estimation error used in \eqref{1.1} is very special, in that it affords a direct link to the posterior variance which makes explicit computation possible. It would be   interesting to see how far an analysis along the lines of the one carried out here can go, using a different criterion for the estimation error; in particular, a one-sided criterion such as $\,\E \big[ \big(X-\widehat X (\tau)\big)^+ \, \big] ,$ in the manner carried out in  \cite{K} in the context of a detection problem.


\section{The Model}
\label{sec2}

Consider a probability space $(\Omega, \mathcal A,\P)$ rich enough to accommodate a standard, scalar Brownian motion $Y(\cdot)=(Y(t))_{0\leq t<\infty}$ and an independent random variable $X$.
This quantity has known distribution ${\bm \mu}$ with positive and finite variance 
$$
0< V(0)\equiv   \Var(X):= \int_\R   b^2   \, {\bm \mu} (\ud b) - \left(\int_\R   b^2   \, {\bm \mu} (\ud b) \right)^2 \le \int_\R   b^2   \, {\bm \mu} (\ud b)< \infty.
$$ 
For technical convenience, we assume also 
$$
\int_\R \exp \big\{ \alpha \, b^2 \big\} \, {\bm \mu} (\ud b) < \infty\,, \hbox{    ~~for some  }~ \alpha \in (0, \infty)\,.
$$
We denote by $\mathbb F=\big(\F(t)\big)_{0\leq t<\infty}$ (respectively, by $\mathbb G=\big(\mathcal G(t)\big)_{0\leq t<\infty}\,$) the smallest right-continuous filtration to which the process $Y(\cdot)$
(resp., the pair $(X,Y(\cdot))$ consisting of the random variable $X$ and the process $Y(\cdot)$) is adapted. We think of $\mathbb G$ as the ``initial enlargement" of $\mathbb F$
by the random variable $X$, and denote the ``ultimate" $\sigma$-algebras of the filtrations $\mathbb F$ and $\mathbb G$, respectively, by 
\[
\F(\infty):=\sigma \big(\cup_{0\leq t<\infty}\F(t)\big)\,,  \qquad 
\mathcal G(\infty):=\sigma \big(\cup_{0\leq t<\infty}\mathcal G(t)\big)\,.
\]

\smallskip
We consider also the collection $\,\U$ of all $\mathbb F-$progressively-measurable 
processes $u(\cdot)=(u(t))_{0\leq t<\infty}$ with values in $(0,1]$, that satisfy for {\it every} $\omega\in\Omega$ the nondegeneracy condition
\begin{equation}
\label{2.1}
\varlimsup_{t\to\infty}\,\frac{1}{\,t\,}\int_0^t u(s,\omega)\,\ud s> 0\,.
\end{equation}

  For each such ``control process" $u(\cdot)\in\U $ and each $t\in[0,\infty)$, we introduce the   measure $\,\P^u_t\sim\P\,$ on $(\Omega,\G(t))$ via
\begin{equation}
\label{2.2}
\left.\frac{\ud \P_t^u}{\ud \P}\right\vert_{\G(t)} = \Lambda^u (t):=\exp\left(X\int_0^t u(s)\, \ud Y(s)-\frac{X^2}{2}\int_0^t u^2(s)\, \ud s\right)\,, \quad 0 \le t < \infty\,.
\end{equation}
The resulting process $\Lambda^u(\cdot)=\big(\Lambda^u(t)\big)_{0\leq t<\infty}$ is clearly a 
$(\P/\mathbb G)$-local martingale and supermartingale. In fact, it is also a  {\it martingale,} as it has constant expectation 
\[
\E^\P\left[ \Lambda^u(t)\right] =
\int_\R \E^\P\left[\exp\left(b\int^t_0 u(s)\, \ud Y(s)-\frac{b^2}{2}\int^t_0 u^2(s)\, \ud s\right)\right]  {\bm \mu}   (\ud b) =1\,, \quad 0 \le t < \infty
\]
on account of the independence of $X$ and $Y(\cdot)$ under $\P \,$, and of the boundedness of $u(\cdot)$. Consequently,  each measure $\P^u_t$ as in \eqref{2.2} is in fact a probability measure on $(\Omega,\G(t))$. 

The theory of the so-called \textsc{F\"ollmer} measure (\cite{F1}; 
see also the Discussion on page 192 of \cite{KS1}) provides now the existence 
of a probability measure $\P^u$ on $\G(\infty)$, which agrees with $\P^u_t$ on 
$\G(t)$, for every $t\in[0,\infty)$. This theory needs certain topological assumptions on the 
underlying space $\Omega$ and  filtration $\mathbb G$, which are anything but onerous and are tacitly assumed here. We stress   that the measure $\P^u$ need  not be absolutely continuous with respect to $\P$ on $\,\G(\infty)$.

The \textsc{Girsanov} theorem (cf.\,\cite[Theorem 3.5.1]{KS1}) allows us now to write the dynamics 
\begin{equation}
\label{2.3}
Y(t)= X\int_0^t u(s)\, \ud s + W^u(t), \qquad 0\leq t<\infty,
\end{equation}
with $W^u(\cdot)=\big(W^u(t)\big)_{0\leq t<\infty}$ a standard, scalar $(\P^u/\mathbb G)-$Brownian motion on $(\Omega, \G(\infty))$. In particular, this Brownian motion is independent of 
$\,\G(0)=\sigma(X)$ under $\P^u$.

\smallskip
We interpret  the equation \eqref{2.3} as positing that {\it we observe the random variable $X$ in a bath of white noise;}  and that, based on the record  $\F(t)
$ of observations $\,Y(s), ~0 \le s \le t\,$ accumulated up to any given time $t\in[0,\infty)$, we can select the ``instantaneous intensity" $u(t)\in(0,1]$ of these observations.  We denote then by 
\begin{equation}
\label{2.4}
\widehat X^u(t) := \E^{\P^u}\big[X\,\big \vert \,\F(t)\big]=\E^{\P^u_t}\big[X\,\big \vert \, \F(t)\big]\end{equation}
the $\P^u-$conditional expectation (least-squares estimate) of $X$ at time $t \in [0, \infty),$  given the observations $\F(t)$ up to that time; and by
\begin{equation}
\label{2.5}
V^u (t):= \Var^{\P^u}\big(X\, \big \vert \, \F(t)\big) \,= \,\E^{\P^u}\left[\big(X-\widehat X^u(t) \big)^2 \,\big \vert \,\F(t)\right] \,= \,
\E^{\P^u_t}\left[\big(X-\widehat X^u(t) \big)^2 \,\big \vert \,\F(t)\right]
\end{equation}
the $\P^u-$conditional variance of $X$, given $\F(t)$.

\subsection{The Problem}
\label{sec2.1}

We wish to estimate the unobservable quantity $X$ ``as faithfully as possible", by trying to keep the conditional variance in \eqref{2.5} as small as we can. But suppose that observation is costly, and proportional to the square of the intensity $u(t)$ in \eqref{2.3}, per unit of time. Then, observing over a long time-interval, and using a large intensity $u(\cdot)$, increases the fidelity of estimation  but also the incurred cost. The question we study, then,  is  {\it how   to balance the  two competing objectives,} of fidelity in estimation and   cost minimization,  
{\it in an optimal fashion, when we can also choose the termination time of the experiment.}

\smallskip
For that, we need a criterion. With a given real constant $c>0$ measuring the weight we assign to the cost of observation per unit of time, we set out to minimize the expected total cost 
\begin{equation}
\label{2.6}
\E^{\P^u}\left[ \Big(X-\widehat X^u(\tau) \Big)^2 + c\int_0^\tau u^2(t)\, \ud t \, \right]
\end{equation}
associated with employing a control $u(\cdot)$ and a stop-rule $\tau,$ over all pairs $(\tau,u(\cdot))\in\mathcal T\times\U$. Here 
$\mathcal T$ is the collection of stopping times of the observations filtration $\mathbb F=\big(\F(t)\big)_{0\leq t<\infty}\,$, 
and $\,\U$ the collection of control processes introduced at the start of the present section.

\subsection{A Modified Criterion}
\label{sec2.2}

A bit more generally, we consider also the problem of minimizing, again over all pairs $(\tau,u(\cdot))\in\mathcal T\times\U$, the expected total cost 
\begin{equation}
\label{2.7}
\E^{\P^u}\left[ \Big(X-\widehat X^u(\tau)\Big)^2 + \int_0^\tau h \big(u(t)\big)\, \ud t \,\right].
\end{equation}
Here $h:(0,1]\to(0,\infty)$ is a continuous and non-decreasing function, for which we assume the existence of a number  $u_0\in(0,1]$ with the property   
\begin{equation}
\label{2.8.5}
\frac{h(u)}{u^2}\geq \frac{h(u_0)}{u_0^2}\,, \quad \forall ~ u\in(0,1].
\end{equation}
\begin{example} {\it (A Superquadratic Cost of Control):} 
When  \eqref{2.8.5} holds with $u_0=1$, and setting $c:=h(1)$,   the above condition becomes
\begin{equation}
\label{2.8}
h(u)\geq c\,u^2,\quad \forall ~u\in(0,1]\,.
\end{equation}
We express 
this special case by saying that it imposes a 
 {\it superquadratic cost of control}.

This dispensation includes, for instance, a linear cost $h(u)=cu$ of observation per unit of time; or more generally, a cost of the form $\, h (u) =\, c \,u^p\,$ for $0<p<2$. 
\end{example}

\section{Elementary Filtering Theory}
\label{sec3}

It is fairly clear, from \eqref{2.2} and the \textsc{Bayes} rule, that the conditional (``posterior") distribution of $X$, under the probability measure $\P^u$ and given $\F(t)$, is
\begin{equation}
\label{3.1}
\P^u \big(X\in B\,\big \vert \, \F(t)\big)= 
{\bm \mu}_{A^u(t),\,Z^u(t)}(B),\qquad B\in\mathcal B(\R)
\end{equation}
for every given $t \in [0, \infty)$. We are invoking in this expression the random variables 
\begin{equation}
\label{3.2}
A^u(t):=\int_0^tu^2(s)\ud s, \quad \quad Z^u(t):=\int_0^t u(s)\, \ud Y(s),
\end{equation}
and the family of probability measures 
\begin{equation}
\label{3.3}
{\bm \mu}_{\,\theta,\zeta}(B):=\frac{1}{F(\theta,\zeta)}\int_B \exp\left(b \,\zeta-\frac{b^2}{2}\,\theta\right){\bm \mu}(\ud b), \qquad (\theta,\zeta)\in(0,\infty)\times\R.
\end{equation}
The normalizer 
\begin{equation}
\label{3.4}
F(\theta,\zeta):=\int_\R \exp\left(b\,\zeta-\frac{b^2}{2}\,\theta\right){\bm \mu}(\ud b)
\end{equation}
in (\ref{3.3}), is the so-called \textsc{Widder} transform (\cite{W};   \S4.3.B in \cite{KS1}) of the probability measure ${\bm \mu}$ at $(\theta,\zeta)$;
and the resulting function solves on $(0,\infty)\times\R$  the backwards heat equation 
\begin{equation}
\label{3.5}
\partial F + \frac{1}{\,2\,} \,\D^2 F=0\,.
\end{equation}
Here and throughout this paper, we denote by $\partial$ and $\D$ differentiation with respect to 
the temporal and the spatial argument (here $\theta$ and $\zeta$), respectively.

The logarithmic gradient
\begin{equation}
\label{3.6}
G(\theta,\zeta):=\D\log F(\theta,\zeta)=\int_\R b\,{\bm \mu}_{\theta,\zeta}(\ud b)
\end{equation}
of the \textsc{Widder} transform in \eqref{3.4} gives the center of gravity of the distribution in \eqref{3.3}, and   solves on $(0,\infty)\times\R$ the backwards \textsc{Burgers} equation
\begin{equation}
\label{3.7}
\partial G + \frac{1}{\,2\,}\,\D^2 G + G\cdot \D G =0\,.
\end{equation}
Whereas, the gradient 
\begin{equation}
\label{3.8}
H(\theta,\zeta):=\D G(\theta,\zeta)=\int_\R \Big(b-G(\theta,\zeta)\Big)^2 {\bm \mu}_{\theta,\zeta}(\ud b)
=\int_\R b^2 {\bm \mu}_{\theta,\zeta}(\ud b) -G^2(\theta,\zeta)
\end{equation}
of this new function $G,$ is the second central moment of the distribution ${\bm \mu}_{\theta,\zeta}$ in \eqref{3.3}  and   solves  on $(0,\infty)\times\R$ the equation
\begin{equation}
\label{3.9}
\partial H + \frac{\,1\,}{2}\, \D^2 H + G\cdot \D H + H^2 =0\,.
\end{equation}

In terms of these functions, and of the random quantities in \eqref{3.2}, 
the posterior mean and variance of \eqref{2.4}/\eqref{2.5} are expressed concisely as
\begin{equation}
\label{3.10}
\widehat X^u(t)= \E^{\P^u} \big[X \, \big \vert \, \F(t)\big] = G \big(A^u(t), Z^u(t)\big)
\end{equation}
and 
\begin{equation}
\label{3.11}
 V^u (t)=\Var^{\P^u} \big(X \, \big \vert \, \F(t)\big) = \E^{\P^u}\left[ \Big(X-\widehat X^u(t)\Big)^2\, \big \vert \,\F(t)\right]= H \big(A^u(t), Z^u(t)\big).
\end{equation}
These two representations will be very useful in what follows.

\subsection{Innovations}

We introduce now, for each given control process $u(\cdot)\in\U$, the so-called {\it innovation process} 
\begin{equation}
\label{3.12}
N^u(t):= Y(t)-\int_0^t\widehat X^u(s) \,u(s)\,\ud s = W^u(t)+\int_0^t \big(X-\widehat X^u(s)\big)\,u(s)\,\ud  s  
\end{equation}
for $0\leq t<\infty.$ This is fairly easily checked to be a $(\P^u/\mathbb F)-$martingale. It has also continuous paths, 
and its quadratic variation over an interval $[0,t]$ is equal to the length $t$ of the interval, so this process is a
$(\P^u/\mathbb F)-$Brownian motion by a   result of P.\,\textsc{L\'evy} (Theorem 3.3.16 in \cite{KS1}).

In terms of this process $N^u(\cdot)$, and  in conjunction with \eqref{3.10}, \eqref{3.7} and some fairly elementary stochastic calculus, the $(\P^u/\mathbb F)-$martingale 
$$
\widehat X^u(t)= \E^{\P^u} \big[X \, \big \vert \, \F(t) \big] =\int_\R b\,{\bm \mu}_{A^u(t),Z^u(t)}(\ud b)\,, \qquad 0\leq t <\infty
$$ 
of \eqref{2.4}, \eqref{3.10} is seen to have the dynamics 
\begin{equation}
\label{3.13}
\ud \widehat X^u(t)= H\big(A^u(t),Z^u(t) \big)\,u(t) \, \ud N^u(t).
\end{equation}

\subsection{Re-Parametrization}

Let us denote by $\I$ the interior of the smallest closed interval that contains the support of the probability measure ${\bm \mu}$. Because ${\bm \mu}$ has strictly positive variance, the probability measure ${\bm \mu}_{(\theta, \zeta)}$ in \eqref{3.3} is not a \textsc{Dirac}   mass $\,{\bm \delta}_{G(\theta, \zeta)}\,$,    so the quantity $ H(\theta,\zeta)=\D G(\theta,\zeta)$ in \eqref{3.8} is strictly positive. As a consequence, for every fixed $\theta\in(0,\infty)$ the continuous function
$$\R\ni \zeta \longmapsto G_\theta(z):=G(\theta,\zeta)\in \I$$ in \eqref{3.6} is strictly increasing. Denoting the inverse of this function by $G_\theta^{-1}(\cdot)$, we re-cast \eqref{3.10}
as 
\begin{equation}
\label{3.14}
Z^u(t)=G^{\,-1}_{A^u(t)} \big(\widehat X^u(t)\big), \qquad 0\leq t<\infty.
\end{equation}

We introduce also the composite function 
\begin{equation}
\label{3.15}
(0,\infty)\times\I \,\ni(\theta,x)\longmapsto\Psi(\theta,x):=H \big(\theta,G_\theta^{-1}(x)\big)\in(0,\infty).
\end{equation}
This   solves on the strip $\,(0,\infty)\times\I\,$ the (fully nonlinear)  equation of parabolic type 
\begin{equation}
\label{3.16}
\partial \Psi + \Psi^2 \Big(1+\frac{1}{\,2\,}\,\D^2\Psi \Big)=0\,,
\end{equation}
  and its temporal derivative is non-positive there (cf.\,\cite[Proposition 3.6]{EV}):
\begin{equation}
\label{3.17}
\partial \Psi\leq 0, \quad\mbox{ equivalently }\quad\D^2\Psi\geq -2\,, \qquad \text{on}~ (0, \infty) \times {\cal I}\,.
\end{equation}
It is worth noting that this equation \eqref{3.16} for $\Psi$, just like the equations \eqref{3.5} for $F$ and \eqref{3.7} for $G$,  is {\it autonomous;} and that this is {\it not} the case for the equation  \eqref{3.9} that governs the function $H$, which needs the function $G$ as its input.

In the light of \eqref{3.14} and \eqref{3.15}, we can cast the conditional  (posterior) variance process of \eqref{2.5}/\eqref{3.11} in terms of the function $\Psi$ in \eqref{3.16}, as 
\begin{equation}
\label{3.18}
V^u(t)=\Var^{\P^u} \big(X\,\big \vert \, \F(t)\big) = H \big(A^u(t),Z^u(t)\big)= \Psi \big(A^u(t),\widehat X^u(t)\big),\qquad 0\leq t <\infty
\end{equation}
and the dynamics of \eqref{3.13} for the $(\P^u/\mathbb F)-$martingale  $\widehat X^u(\cdot)$ as
\begin{equation}
\label{3.19}
\ud \widehat X^u(t) = \Psi \big(A^u(t),\widehat X^u(t)\big) \,u(t) \,\ud N^u(t).
\end{equation}

\subsection{Time-Change}

It makes 
good sense at this point, to look at said dynamics under the lens of a time-change, as follows. We introduce the square-integrable $(\P^u/\mathbb F)-$martingale 
\begin{equation}
\label{3.20}
M^u(t):=\int_0^tu(r) \, \ud N^u(r),\quad 0\leq t<\infty,
\end{equation}
and note its quadratic variation 
\begin{equation}
\label{3.20a}
\langle M^u\rangle (t) = \int_0^t u^2(r)\,\ud r = A^u(t)\leq t\,.
\end{equation}

On the strength of the \textsc{Dambis-Dubins-Schwarz} theorem,  this martingale can be cast as 
\[M^u(t)=B^u \big(A^u(t)\big), \quad 0\leq t<\infty\]
in terms of a suitable $\P^u-$Brownian motion $B^u(\cdot)$.
In fact, from the non-degeneracy condition \eqref{2.1} and the \textsc{Cauchy-Schwarz} inequality, we obtain 
\[
0<\varlimsup_{t\to\infty}\left(\frac{1}{t}\int^t_0 u(r,\omega)\ud r\right)^2
\leq \, \varlimsup_{t\to\infty}\left(\frac{1}{t}\,A^u(t,\omega)\right).
\]
It  follows   that the condition \eqref{2.1} implies $A^u(\infty,\omega)=\infty$ for every 
$\omega\in\Omega\,$; thus, we can invoke Theorem 3.4.6 in \cite{KS1}, and express  the $\P^u-$Brownian motion $B^u (\cdot)$ above as 
\begin{equation}
\label{3.21}
B^u(s)= M^u \big(T^u(s)\big), \qquad T^u(s):=\inf\{t\geq 0:A^u(t)\geq s\}
\end{equation}
for $0\leq s<\infty$\,. 

Likewise, we introduce the time-changed process
\begin{equation}
\label{3.22}
Q^u(s):=\widehat X^u \big(T^u(s)\big), \qquad 0\leq s<\infty 
\end{equation}
and note that, in terms of it, the dynamics of \eqref{3.19} are recast as the diffusion equation
\begin{equation}
\label{3.23}
\ud Q^u(s) = \Psi \big(s,Q^u(s)\big)\,\ud B^u(s),\qquad Q^u(0)= m:=\int_\R b\,{\bm \mu}(\ud b)\,.
\end{equation}

It is important here to note that the stochastic equation \eqref{3.23}, driven by the $\P^u-$Brownian motion $B^u (\cdot)$, admits a strong 
solution which is unique pathwise, thus also in distribution; in particular, {\it the distribution of the diffusion process $Q^u(\cdot)$    does not depend on the control} 
$\,u(\cdot)\in \U$.

\subsection{Filtrations and   Stopping Times}

The following result elucidates the structure of the various filtrations. Although we do not use it directly below, we find it to be of considerable intrinsic 
interest: it states that the reformulation/reparametrization carried out in section \ref{sec4} does not result in diminution or enlargement of the observations filtration $\mathbb{F}$.

\smallskip
Here and below, we denote by $\, \mathbb F^{Z}=  (\F^{Z}(t) )_{0\leq t<\infty  }\,$ the   filtration generated by a given process  $Z=   (  Z(t) )_{0 \le t < \infty} \,$  with values in some Euclidean space: namely,   the smallest right-continuous filtration  to which the given process $Z$ is adapted. (With this notation, we have in fact $\,\mathbb F  \equiv \mathbb F^{Y}$ and $\,\mathbb G  \equiv \mathbb F^{(X,Y)}$  in the context of  section \ref{sec2}.)  

\begin{proposition} {\bf Sufficient Statistic.} 
\label{prop3.1}
For every control process $u(\cdot)\in\U$, the pair 
of processes $\big(A^u(\cdot),\widehat X^u(\cdot)\big)$  from \eqref{2.4}/\eqref{3.2} constitutes a sufficient statistic for the observation filtration $\mathbb F$, in the sense that it generates the same filtration as the observations process:
\begin{equation}
\label{3.24}
\F^{\,(A^u, \,\widehat X^u)}(t)\,=\,\F^Y(t)\,\equiv \,\F(t), \qquad 0\leq t<\infty.
\end{equation}
\end{proposition}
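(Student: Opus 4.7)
The plan is to establish the two inclusions of \eqref{3.24} separately. The inclusion $\F^{(A^u,\widehat X^u)}(t)\subseteq \F^Y(t)$ is essentially by construction: since $u(\cdot)$ is $\mathbb F-$progressively measurable by the very definition of $\U$, the process $A^u(t)=\int_0^t u^2(s)\ud s$ is $\F^Y(t)-$measurable, while $\widehat X^u(t)=\E^{\P^u}\!\big[X\,\vert \,\F(t)\big]$ is $\F^Y(t)-$measurable by definition. The minimality of the right-continuous filtration generated by $(A^u,\widehat X^u)$ then yields this direction.

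For the reverse inclusion $\F^Y(t)\subseteq \F^{(A^u,\widehat X^u)}(t)$, the idea is to reconstruct $Y$ from the pair $(A^u,\widehat X^u)$ through a chain of measurable operations. First, since $\zeta\mapsto G(\theta,\zeta)$ is, for each fixed $\theta>0$, a continuous, strictly increasing bijection from $\R$ onto $\I$, identity \eqref{3.14} exhibits $Z^u(t)=G^{-1}_{A^u(t)}\big(\widehat X^u(t)\big)$ as a jointly Borel functional of $(A^u(t),\widehat X^u(t))$, hence $\F^{(A^u,\widehat X^u)}(t)-$measurable. Second, since $A^u(\cdot)$ is absolutely continuous with density $u^2(\cdot)$, the one-sided Lebesgue derivative
\[
\varlimsup_{\delta\downarrow 0}\,\frac{1}{\delta}\Big(A^u(t)-A^u\big((t-\delta)^+\big)\Big)
\]
recovers $u^2(t)$ at almost every $t$ and furnishes an $\mathbb F^{A^u}-$predictable version; together with $u>0$ this makes $u(\cdot)$ itself $\mathbb F^{A^u}-$predictable. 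Third, the stochastic integral $\int_0^t u(s)^{-1}\ud Z^u(s)$ is well posed, since by \eqref{3.20a}
\[
\int_0^t u(s)^{-2}\ud\langle Z^u\rangle(s)=\int_0^t u(s)^{-2}\,u^2(s)\,\ud s = t<\infty,
\]
and it equals $Y(t)$ pathwise, because $\ud Z^u(s)=u(s)\ud Y(s)$. Both integrand and integrator being $\mathbb F^{(A^u,\widehat X^u)}-$adapted, the integral, and hence $Y(t)$, is $\F^{(A^u,\widehat X^u)}(t)-$measurable.

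The most delicate point is the measurable recovery of $u(\cdot)$ from $A^u(\cdot)$: absolute continuity is immediate, but turning the identity ``$u^2$ is the density of $A^u$" into a genuinely $\mathbb F^{A^u}-$predictable version of $u$ requires a little attention to negligible sets of times. Everything else is book-keeping, and the two inclusions together give \eqref{3.24}.
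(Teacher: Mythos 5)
Your proposal is correct and follows essentially the same route as the paper: the easy inclusion by measurability of $A^u(t)$ and $\widehat X^u(t)$, and the reverse inclusion by recovering $Z^u$ via \eqref{3.14} and writing $Y(t)=\int_0^t u(r)^{-1}\,\ud Z^u(r)$ with $u$ read off as the square root of the density of $A^u$. The paper compresses this into the single display $Y(t)=\int_0^t \big(\tfrac{\ud}{\ud r}A^u(r)\big)^{-1/2}\ud G^{-1}_{A^u(r)}\big(\widehat X^u(r)\big)$; your added care about obtaining a predictable version of $u$ from the Lebesgue derivative of $A^u$ is a point the paper leaves implicit.
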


\begin{proof}
Both random variables $A^u(t)$, $\widehat X^u(t)$ from \eqref{3.2}, \eqref{2.4} are 
$\F(t)-$measurable, for each $t\in[0,\infty)$, so the inclusion 
 $\,\F^{\,(A^u, \,\widehat X^u)}(t)\,\subseteq\,\F^Y(t)\,   $  in 
\eqref{3.24} is evident. On the other hand, \eqref{3.2} and \eqref{3.14} give
\[Y(t)= \int_0^t\frac{\ud Z^u(r)}{u(r)}
=\int_0^t \left(\frac{\ud}{\ud r} A^u(r)\right)^{-1/2} \ud G^{\,-1}_{A^u(r)}\big(\widehat X^u(r)\big).\]
This shows that, for every $t\in[0,\infty)$, the random variable $\,Y(t)\,$ is $\,\F^{\,(A^u,\,\widehat X^u)}(t)-$measurable, so the   inclusion  $\,\F^{\,(A^u, \, \widehat X^u)}(t)\,\supseteq\,\F^Y(t)\,   $ in \eqref{3.24} follows as well.
\end{proof}

Down the road, we shall need the following result regarding stopping times of different filtrations. We recall for this purpose the $\mathbb F-$adapted, continuous and strictly increasing process $A^u(\cdot)$ from \eqref{3.2}, its inverse $T^u(\cdot)$ from \eqref{3.21}, the diffusion process $Q^u(\cdot)$ of \eqref{3.22}/\eqref{3.23}, and the collection $  \mathcal T $ of stopping times of the observation filtration $\mathbb F$. 

\begin{proposition}
\label{prop3.2}
(i) If a random time $\tau:\Omega\to [0,\infty]$ is a stopping time of the observations filtration
$\mathbb F$, then, for any given control $u(\cdot)\in\U$, the random time $A^u(\tau)$ is 
a stopping time of the filtration 
\begin{equation}
\label{3.25}
\mathcal H^u(s):=\F\big(T^u(s)\big), \qquad 0\leq s<\infty\,.
\end{equation}
This is larger than the filtration
$\, \mathbb F^{Q^u}= \big(\F^{Q^u}(s)\big)_{0\leq s<\infty  } \,$   
 generated by the diffusion process $Q^u(\cdot)$ in \eqref{3.22}/\eqref{3.23}.  
 
\smallskip
\noindent
 (ii) Conversely, for any given control $u(\cdot)\in\U$ and every  stopping time $\, \rho \,$ of the filtration $\mathbb H^u= \big(\mathcal H^u(s)\big)_{0\leq s<\infty}$ in \eqref{3.25}, we have     $\,\tau:=T^u(\rho)\in\mathcal T$.
\end{proposition}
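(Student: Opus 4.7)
The key structural fact is that, because each $u(\cdot) \in \U$ is pointwise positive on $(0,1]$ and condition \eqref{2.1} forces $A^u(\infty) = \infty$, the process $A^u:[0,\infty] \to [0,\infty]$ is a continuous, strictly increasing bijection, with equally regular inverse $T^u$. Consequently,
\[
\{A^u(\tau) \leq s\} = \{\tau \leq T^u(s)\}, \qquad \{T^u(\rho) \leq t\} = \{\rho \leq A^u(t)\}
\]
are literal set equalities, and the whole proposition amounts to transferring measurability across these pairings. A routine preliminary I would state once and reuse is that each $T^u(s)$ is an $\mathbb F$-stopping time, since $\{T^u(s) \leq t\} = \{A^u(t) \geq s\} \in \F(t)$; and that $\mathbb H^u$ is itself right-continuous, because $T^u$ is continuous and $\mathbb F$ is right-continuous.

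\textbf{Part (i).} The first identity recasts $\{A^u(\tau) \leq s\}$ as $\{\tau \leq T^u(s)\}$, a comparison of two $\mathbb F$-stopping times, which by the standard property of $\F$ at a stopping time lies in $\F(T^u(s)) = \mathcal H^u(s)$; thus $A^u(\tau)$ is an $\mathbb H^u$-stopping time. For the filtration inclusion $\mathbb F^{Q^u} \subseteq \mathbb H^u$, I would invoke the fact that $\widehat X^u(\cdot)$ is $\mathbb F$-progressive (being $\mathbb F$-adapted and continuous), so its value $Q^u(s) = \widehat X^u(T^u(s))$ at the $\mathbb F$-stopping time $T^u(s)$ is $\F(T^u(s)) = \mathcal H^u(s)$-measurable. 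Right-continuity of $\mathbb H^u$ then promotes this to $\F^{Q^u}(s) \subseteq \mathcal H^u(s)$.

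\textbf{Part (ii).} I would approximate $\rho$ from above by the dyadic $\mathbb H^u$-stopping times $\rho_n := 2^{-n} \lceil 2^n \rho \rceil \wedge n$, each with values in a countable set and satisfying $\rho_n \downarrow \rho$. For such discrete $\rho_n$,
\[
\{T^u(\rho_n) \leq t\} = \bigcup_{k \geq 0} \big( \{\rho_n = k 2^{-n}\} \cap \{T^u(k 2^{-n}) \leq t\} \big),
\]
and since $\{\rho_n = k 2^{-n}\} \in \mathcal H^u(k 2^{-n}) = \F(T^u(k 2^{-n}))$ while $T^u(k 2^{-n})$ is an $\mathbb F$-stopping time, the very definition of $\F$ at a stopping time places each summand in $\F(t)$. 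Hence each $T^u(\rho_n) \in \mathcal T$, and continuity of $T^u$ together with right-continuity of $\mathbb F$ yield $T^u(\rho) = \lim_n T^u(\rho_n) \in \mathcal T$.

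\textbf{Anticipated obstacle.} Part (i) is clean, direct bookkeeping once the $A^u \leftrightarrow T^u$ bijection is in place. The genuine subtlety is part (ii): the event $\{\rho \leq A^u(t)\}$ places a random $\F(t)$-measurable upper bound on the $\mathbb H^u$-stopping time $\rho$, and no single $\mathcal H^u(s)$ affords a direct comparison with $\F(t)$, so some form of discretization (or an equivalent rational exhaustion of the event) is essentially unavoidable. Boundary cases $\tau = \infty$ or $\rho = \infty$ cause no trouble, since $A^u(\infty) = T^u(\infty) = \infty$ under \eqref{2.1}.
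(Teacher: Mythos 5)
Your part (i) is correct and is essentially the paper's argument: both reduce $\{A^u(\tau)\le s\}=\{\tau\le T^u(s)\}$ to a comparison of two $\mathbb F$-stopping times (the paper cites Lemma 1.2.16 of \cite{KS1}), and both obtain $\mathbb F^{Q^u}\subseteq\mathbb H^u$ from the measurability of $\widehat X^u(T^u(s))$ with respect to $\F(T^u(s))$; your explicit appeal to progressive measurability just makes precise what the paper calls ``clearly adapted.'' Part (ii) is where you genuinely diverge. The paper argues directly: $\{T^u(\rho)\le t\}=\{\rho\le A^u(t)\}$, and since $A^u(t)$ is itself an $\mathbb H^u$-stopping time (by part (i) applied to the deterministic time $\tau=t$), the comparison lemma places this event in the stopped $\sigma$-field $\mathcal H^u\big(A^u(t)\big)$, which is identified with $\F\big(T^u(A^u(t))\big)=\F(t)$ because $A^u$ is a continuous strictly increasing bijection. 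So your ``anticipated obstacle''---that no direct comparison with $\F(t)$ is available and discretization is essentially unavoidable---is not quite right: the random upper bound $A^u(t)$ \emph{is} an $\mathbb H^u$-stopping time, and the standard fact $\{\rho\le\sigma\}\in\mathcal H^u_\sigma$ does the job in one line. Your dyadic-approximation route is nonetheless valid and arguably more self-contained (it avoids identifying the stopped $\sigma$-field of a time-changed filtration, at the price of invoking right-continuity of $\mathbb F$ to pass to the limit), but fix one cosmetic slip: with the truncation $\rho_n:=2^{-n}\lceil 2^n\rho\rceil\wedge n$ the sequence is neither monotone nor an upper approximation on $\{\rho>n\}$; simply drop the $\wedge\, n$ and set $\rho_n:=2^{-n}\lceil 2^n\rho\rceil$ (equal to $+\infty$ on $\{\rho=\infty\}$), which takes values in a countable set, decreases to $\rho$, and makes $T^u(\rho_n)\downarrow T^u(\rho)$ as you need.
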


\begin{proof}
(i) For any given control $u(\cdot)\in\U$, the strictly increasing change-of-clock process $A^u(\cdot)$
in \eqref{3.2} is clearly adapted to the observations filtration $\mathbb F$, so each 
$T^u(s)$ in \eqref{3.21} is a stopping time of this filtration. Thus, from basic properties of 
filtrations and stopping times (cf.\,\cite{KS1}, Lemma 1.2.16), 
\begin{equation}
\label{3.26a}
\{A^u(\tau)\leq s\} = \{\tau\leq T^u(s)\}\in\F\big(T^u(s)\big) = \mathcal H^u(s)
\end{equation}
holds for every $s\in[0,\infty)$, so $A^u(\tau)$ is a stopping time of the filtration $\,\mathbb H^u= \big(\mathcal H^u(s)\big)_{0\leq s<\infty}\,$ in 
\eqref{3.25}. The process $Q^u(\cdot)$ of \eqref{3.22} is clearly adapted to this filtration $\,\mathbb H^u  ,$  
because the process $\widehat X^u(\cdot)$ in \eqref{2.4} is adapted to the filtration $\mathbb F$.

\smallskip
\noindent
(ii) Similarly, the string 
\[
\{\tau\leq t\}=\{T^u(\rho)\leq t\} = \{\rho\leq A^u(t)\}\in \mathcal H^u\big( A^u(t) \big)=   \F \big(t\big)\,, \qquad 0\leq t<\infty,\]
  shows that $\tau\in\mathcal T$.
\end{proof}

\section{Reformulation}
\label{sec4}

The considerations of the previous section allow us to carry out a detailed study of the posterior  variance process 
\[
V^u(t)= \Var^{\P^u} \big(X \, \big \vert \, \F(t)\big)= \E^{\P^u_t}\left[ \Big(X-\widehat X^u(t)\Big)^2 \, \Big \vert \, \F(t)\right], \qquad 0\leq t<\infty
\]
in \eqref{2.5}/\eqref{3.11}. This then leads to a reformulation of the problems in 
subsections~\ref{sec2.1}/\ref{sec2.2}, that will make them amenable to analysis, and eventually even to computation.

We start by noting that the dynamics of \eqref{3.19} imply, in conjunction with \eqref{3.18}/\eqref{3.16},   the dynamics 
\[
dV^u(t)= -\Psi^2 \big(A^u(t), \widehat X^u(t)\big)\,\ud A^u(t) + (\Psi\cdot \D\Psi) \big(A^u(t),\widehat X^u(t)\big)\,\ud M^u(t)
\]
for the conditional variance process in \eqref{2.5}/\eqref{3.11}. It follows that the positive process
\begin{equation}
\label{4.2}
V^u(t)+\int_0^t \big( V^u(s)\big)^2\ud A^u(s) = 
V(0)+ \int_0^t(\Psi\cdot \D\Psi) \big(A^u(s),\widehat X^u(s)\big)\,\ud M^u(s), \quad 0\leq t<\infty
\end{equation}
where $\,V(0) = \Var (X),$ is a $(\P^u/\mathbb F)-$local martingale, and therefore a supermartingale; thus 
\begin{equation}
\label{4.3}
\E^{\P^u}\left[ V^u(\tau) + \int_0^\tau \big( V^u(t)\big)^2 \, \ud A^u(t)\right]\leq 
V(0)<\infty
\end{equation}
holds for every stopping time $\tau\in\mathcal T$, in particular, 
\begin{equation}
\label{4.4}
\E^{\P^u}\left[ \int_0^\infty \big( V^u(t)\big)^2\,\ud A^u(t)\right]\leq 
V(0)<\infty.
\end{equation}

\begin{proposition}
\label{prop4.1}
The first inequalities in each of \eqref{4.3}/\eqref{4.4} hold as equalities, and the process of 
\eqref{4.2} is a true $(\P^u/\mathbb F)-$martingale.
\end{proposition}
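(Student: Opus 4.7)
My plan is to derive equality in \eqref{4.3} directly by exploiting the $L^2$-martingale structure of the filter $\widehat X^u(\cdot)$ itself, and then to deduce both the true-martingale claim for the process in \eqref{4.2} and equality in \eqref{4.4} as immediate consequences.

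\textbf{The key identity.} Since $\widehat X^u(t)=\E^{\P^u}[X\,\vert\,\F(t)]$ is the conditional expectation of an $L^2$ random variable, it is a uniformly integrable, $L^2$-bounded $(\P^u/\mathbb F)$-martingale, and Doob's maximal inequality yields $\E^{\P^u}\big[\sup_{t\geq 0}(\widehat X^u(t))^2\big]\leq 4\,\E[X^2]<\infty$. From \eqref{3.19}, its quadratic variation is $\langle\widehat X^u\rangle_t=\int_0^t\Psi^2(A^u,\widehat X^u)\,u^2\,\ud s=\int_0^t(V^u(s))^2\,\ud A^u(s)$. Applying optional sampling to the martingale $(\widehat X^u-m)^2-\langle\widehat X^u\rangle$ at $\tau\wedge n$ for an arbitrary $\tau\in\mathcal T$, and passing to the limit (monotone convergence on the quadratic variation, dominated convergence with dominator $\sup_t(\widehat X^u(t))^2$ on the squared martingale), one obtains
\begin{equation*}
\E^{\P^u}\big[(\widehat X^u(\tau)-m)^2\big]=\E^{\P^u}\left[\int_0^\tau (V^u(s))^2\,\ud A^u(s)\right].
\end{equation*}
Two applications of the tower property, the first giving $\E^{\P^u}[V^u(\tau)]=\E^{\P^u}[(X-\widehat X^u(\tau))^2]$ and the second the orthogonality $\E^{\P^u}[X\,\widehat X^u(\tau)]=\E^{\P^u}[(\widehat X^u(\tau))^2]$, combine with the identity above to produce $\E^{\P^u}[V^u(\tau)]=\Var(X)-\E^{\P^u}[\int_0^\tau (V^u)^2\,\ud A^u]$: this is exactly equality in \eqref{4.3} for \emph{every} $\tau\in\mathcal T$.

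\textbf{Consequences.} Specializing to deterministic $\tau=t$, the process $L(\cdot)$ of \eqref{4.2} has constant expectation $L(0)=\Var(X)$; being a nonnegative supermartingale, constant expectation forces it to be a true martingale. For equality in \eqref{4.4}, specialize instead to $\tau=n$ and send $n\to\infty$: the integral term converges by monotone convergence, and one needs $\E^{\P^u}[V^u(n)]=\E^{\P^u}[(X-\widehat X^u(n))^2]\to 0$. This follows because $\widehat X^u(n)\to\E^{\P^u}[X\,\vert\,\F(\infty)]$ in $L^2$, and the latter equals $X$ itself since $X$ is $\F(\infty)$-measurable under $\P^u$; indeed, from $Z^u(t)=X\,A^u(t)+\int_0^t u(s)\,\ud W^u(s)$, one has $Z^u(t)/A^u(t)=X+B^u(A^u(t))/A^u(t)\to X$, where $A^u(\infty)=\infty$ by \eqref{2.1} (as already noted in the paper) and $B^u(s)/s\to 0$ is the Brownian LLN.

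The principal technical point is the extension of the $L^2$-isometry to possibly unbounded stopping times, which is handled through $\tau\wedge n$ and the integrable Doob dominator; for \eqref{4.4} alone, an auxiliary ingredient is the recovery of $X$ from $\F(\infty)$ under $\P^u$, where hypothesis \eqref{2.1} enters essentially. The conceptual content of the plan is that all three claims reduce to a single clean identity derived purely from the $L^2$-martingale structure of $\widehat X^u$, with no need to analyze the stochastic integral $\int\Psi\,\D\Psi\,\ud M^u$ on the right of \eqref{4.2} directly.
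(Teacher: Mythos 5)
Your argument is correct, and it reaches the conclusion by a decomposition that is genuinely, if mildly, different from the paper's. The paper first identifies $X$ as the a.s.\ limit of the filter (via the strong law of large numbers for $W^u$ and condition \eqref{2.1}), which yields the backward representation $X-\widehat X^u(\tau)=\int_\tau^\infty \Psi\big(A^u(t),\widehat X^u(t)\big)\,u(t)\,\ud N^u(t)$ and hence the \emph{conditional} identity $V^u(\tau)=\E^{\P^u}\big[\int_\tau^\infty \Psi^2\,\ud A^u\,\big|\,\F(\tau)\big]$; equality in \eqref{4.3} then follows from the law of total variance. You instead work forward from $0$ to $\tau$, using the stopped $L^2$-isometry $\E^{\P^u}\big[(\widehat X^u(\tau)-m)^2\big]=\E^{\P^u}\big[\langle\widehat X^u\rangle_\tau\big]$ together with the same variance decomposition; the payoff is that the identification of $X$ with the terminal value of the filter is needed only for \eqref{4.4}, where you supply essentially the paper's recovery argument, phrased through $Z^u(t)/A^u(t)\to X$ rather than through $Y(t)/t$. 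Your derivation of the true-martingale property (nonnegative supermartingale with constant expectation) is also correct and is the natural way to make explicit what the paper leaves implicit. Two small remarks. First, both your proof and the paper's gloss over the same routine point: that $\widehat X^u(\tau)$ and $V^u(\tau)$, defined by evaluating the \emph{processes} at the stopping time, coincide with $\E^{\P^u}[X\,|\,\F(\tau)]$ and $\Var^{\P^u}(X\,|\,\F(\tau))$; this follows from optional sampling applied to the uniformly integrable martingales $\E^{\P^u}[X^k\,|\,\F(t)]$, $k=1,2$, and is needed for your two ``tower property'' steps. Second, a notational slip: the Brownian motion in your recovery of $X$ is the Dambis--Dubins--Schwarz Brownian motion of $\int_0^\cdot u\,\ud W^u$, not the process $B^u$ of \eqref{3.21}, which time-changes $\int_0^\cdot u\,\ud N^u$; both have clock $A^u$, so the argument is unaffected.
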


\begin{proof}
The strong law of large numbers for the $(\P^u/\mathbb G)-$Brownian motion $W^u(\cdot)$ of  \eqref{2.3} shows,
in conjunction with the property \eqref{2.1}, that 
\[X=\frac{\varlimsup_{t\to\infty} (Y(t)/t)}{\,\,\varlimsup_{t\to\infty} \frac{1}{\,t\,}\int_0^t u(s)\,\ud s\,\,} \quad \mbox{ holds }\,\P^u-\mbox{a.e}.\]
Thus, $X$ is measurable with respect to the $\P^u-$completion $\F^u(\infty)$ of the $\sigma$-algebra $\F(\infty)$,
and the \textsc{P.\,L\'evy} martingale convergence theorem (cf.\,Theorem 9.4.8 in \cite{Ch}) gives
\[\lim_{t\to\infty}\E^{\P^u}\left[X^k \, \big   \vert  \, \F(t)\right]=\E^{\P^u}\left[X^k \, \big \vert \, \F^u (\infty)\right]= X^k,\quad k=1,2.\]
Consequently, the conditional variance 
\[
V^u(t)\,=\, H \big(A^u(t), Z^u(t)\big)\,=\,\Var^{\P^u} \big(X \, \big  \vert \, \F(t)\big)\,=\, \E^{\P^u}\left[X^2 \, \big \vert \, \F(t)\right]-\left(\E^{\P^u}\big[X \, \big \vert \, \F(t)\big]\right)^2
\]
from \eqref{3.11} converges $\P^u-$a.e.\,to zero, as $t\to\infty$; the $\P^u-$martingale $\widehat X^u(\cdot)$ in \eqref{2.4}, as well as the 
$\P^u-$submartingale $\big(\widehat X^u(\cdot)\big)^2$, are both uniformly integrable; and the  representation 
\begin{equation}
\label{4.5}
\widehat X^u(\tau)\,= \,m +\int_0^\tau\Psi \big(A^u(t),\widehat X^u(t)\big)\,u(t) \,\ud N^u(t)\,,
\end{equation}
with $m=\E[X]=\int_\R \,b \, {\bm \mu} (\ud b)$ as in \eqref{3.23}, holds $\P^u-$a.e., for every stopping time $\tau\in\mathcal T$, {\it including} $\tau=\infty$.
We deduce 
\[X-\widehat X^u(\tau)=\int_\tau^\infty\Psi \big(A^u(t),\widehat X^u(t)\big)\, u(t) \,\ud N^u(t),\]
thus also 
\begin{eqnarray*}
V^u(\tau) &=& \Var^{\P^u} \big(X\, \big \vert \, \F(\tau)\big)=\E^{\P^u}\left[ \big(X-\widehat X^u(\tau)\big)^2\, \big \vert \, \F(\tau)\right]\\
&=& \E^{\P^u}\left[\left.\int_\tau^\infty \Psi^2 \big(A^u(t),\widehat X^u(t)\big)\,\ud A^u(t)\,\right\vert\F(\tau)\right] = \Psi \big(A^u(\tau),\widehat X^u(\tau)\big);
\end{eqnarray*}
whereas, taking $\P^u$-expectations, the equalities 
\[\E^{\P^u}\left[\Var^{\P^u} \big(X \, \big \vert \, \F(\tau)\big)\right] =\E^{\P^u}\left[ \int_\tau^\infty \Psi^2 \big(A^u(t),\widehat X^u(t)\big) \, \ud A^u(t)\right] 
=\E^{\P^u}\left[\Psi \big(A^u(\tau),\widehat X^u(\tau)\big) \right]\]
are seen to hold  as well. We have used here the finite upper bound in \eqref{4.3}-\eqref{4.4}, which, in conjunction with \eqref{4.5}, yields also 
\[
\Var^{\P^u}\left(\E^{\P^u}\big[X\, \big \vert \,\F(\tau)\,  \big]\,\right) = \Var^{\P^u}\left(\widehat X^{u}(\tau)\right)
=\E^{\P^u}\left[\int_0^\tau\Psi^2 \big(A^u(t),\widehat X^u(t)\big)\, \ud A^u(t)\right].
\]

We recall at this point a classical identity about the variance of a square-integrable random variable (to the effect that it is equal to the sum, of 
the expectation of the conditional variance, plus the variance of the conditional expectation), and obtain
\begin{eqnarray*}
\Var(X) &=& \E^{\P^u}\left[\Var^{\P^u} \big(X\,\big \vert \,\F(\tau)\big)\right] + \Var^{\P^u}\left(\E^{\P^u}\left[X\, \vert \, \F(\tau)\right]\right)\\
&=& \E^{\P^u}\left[\Psi \big(A^u(\tau),\widehat X^u(\tau)\big) + \int_0^\tau\Psi^2 \big(A^u(t),\widehat X^u(t)\big)\ud A^u(t)\right],
\end{eqnarray*}
as well as 
\[
V(0)\,=\,\Var (X)\, = \,\E^{\P^u}\left[\int_0^\infty \Psi^2 \big(A^u(t),\widehat X^u(t)\big)\,\ud A^u(t)\right]
\]
upon taking $\tau=\infty$. Consequently, the first inequality in each of \eqref{4.3}, \eqref{4.4} holds as equality.
\end{proof}

\subsection{Consequences}
\label{sec4.0}

It is now an immediate consequence of Proposition~\ref{prop4.1}, that the expected cost in \eqref{2.6} can be written as
\begin{eqnarray}
\label{4.5.5}
\E^{\P^u}\left[ \big(X-\widehat X^u(\tau)\big)^2 + c\int_0^\tau u^2(t)\,\ud t\right]
&=& \E^{\P^u}\big[V^u(\tau)+ cA^u(\tau)\big] \\
\notag
&=&
V(0)+ \E^{\P^u}\left[\int_0^\tau\Big(c-\big( V^u(t)\big)^2\Big) \ud A^u(t)\right].
\end{eqnarray}
$\bullet~$ Thus, on account of \eqref{3.18}, \eqref{3.2} and \eqref{3.21}--\eqref{3.23}, the problem of subsection~\ref{sec2.1} can be cast equivalently as minimizing, over all pairs 
$(\tau, u(\cdot))\in \mathcal T\times \U$, the expectation 
\begin{equation}
\label{4.6}
\E^{\P^u}\left[\int_0^\tau\left(c-\Psi^2 \big(A^u(t),\widehat X^u(t)\big)\right)\ud A^u(t)\right]
\,=\,\E^{\P^u}\left[\int_0^{A^u(\tau)}\Big(c-\Psi^2 \big(s,Q^u(s)\big)\Big)\,\ud s\,\right].
\end{equation}
$\bullet~$  Likewise, the more general problem of subsection~\ref{sec2.2} amounts to minimizing, over all pairs $(\tau, u(\cdot))\in \mathcal T\times \U$, the expectation 
$$
\E^{\P^u}\left[\int_0^\tau\left(\frac{h(u(t))}{u^2(t)}-\Psi^2 \big(A^u(t),\widehat X^u(t) \big)\right) \ud A^u(t)\right] =~~~~~~~~~~~~~
$$
 \begin{equation}
\label{4.7}
~~~~~~~~~~~~~~~~=\,\E^{\P^u}\left[\int_0^{A^u(\tau)}\left(\frac{h(u(T^u(s)))}{u^2(T^u(s))}-\Psi^2 \big(s,Q^u(s)\big)\right) \ud s\right].
\end{equation}

\section{Results}
\label{sec5}

Let us denote now by $\widehat X(\cdot):= \widehat X^1(\cdot)=Q^1(\cdot)$ the 
$\P^*-$martingale of \eqref{2.4} and \eqref{3.22}/\eqref{3.23}, corresponding to the 
{\it ``full-bang" control }$$u^*(\cdot)\equiv 1\,, $$ with  the identification $\P^*\equiv \P^1,$    dynamics
\begin{equation}
\label{4.8a}
\ud \widehat X (t ) = \Psi \big(t ,\widehat X (t ) \big)\,\ud \widehat N(t)\,, \qquad \widehat X (0 )=m
\end{equation}
in the manner of \eqref{4.5}, \eqref{3.23} 
 for a diffusion in natural scale and values in ${\cal I},$  and $\widehat N (\cdot )$  a $\P^*-$Brownian motion. 
 
 We denote also by $\tau^*$ the smallest stopping time which minimizes 
the expected cost
\[\E^{\P^*}\left[\Psi \big(\tau,\widehat X(\tau)\big)+c\tau\right],\]
or equivalently the expectation
\begin{equation}
\label{4.8}
\E^{\P^*}\left[ \, \int_0^\tau\left(c -\Psi^2 \big(t,\widehat X(t)\big)\right)\ud t \,\right],
\end{equation}
over all stopping times $\tau\in\mathcal T$ and always with the identification $\P^*\equiv \P^1$.  

Such a stopping time turns out to exist, and indeed to have the form (\ref{4.8_st}) below.   
It is clear also from \eqref{4.5.5}, that the infimum over $\tau\in\mathcal T$ of the quantity 
in \eqref{4.8} takes values in $[-\Var(X),0]$.

\subsection{The Problem of Minimizing \eqref{4.8} Subject to  \eqref{4.8a}, in  \cite{EKV}}
\label{sec4.1_0}

The theory of optimal stopping for \textsc{Markov} processes is a well-developed subject, accessible in several sources, for instance in \textsc{Peskir \& Shiryaev}  \cite{PS}. 
The particular problem of minimizing the expectation in \eqref{4.8}, over stopping times of the filtration generated by the one-dimensional diffusion \eqref{4.8a} in natural scale, is studied in detail in \cite{EKV}.

\smallskip
Let us elaborate. Using the Markovian nature of the process $\widehat X(\cdot)$, 
we cast the problem of minimizing the expected cost in \eqref{4.8} in terms of the function 
\[
[0,\infty)\times \mathcal I\ni (s,x)\longmapsto v(s,x)=\inf_{\tau\in\mathcal T}\,
\E^{\P^*}\left[\int_0^\tau\left(c-\Psi^2 \big(t+s,\widehat X^{(s,x)}(t+s) \big)\right) \ud t\right].
\]
Here the minimization is subject to the dynamics 
\[
\ud \widehat X^{(s,x)}(t+s) = \Psi \big(t+s,\widehat X^{(s,x)}(t+s) \big)\,\ud \widehat N(t), \qquad t>0
\]
driven by the ``innovations process", the $\,(\P^*/\mathbb F)-$Brownian motion $\widehat N(\cdot)$, and subject to the initial condition $\widehat X^{(s,x)}(s)=x\in\mathcal I$, in the manner of \eqref{4.8a}.

\medskip
Then the process $\widehat X^{(0,m)}(\cdot)$, with $m=\int_\R b \, {\bm \mu} (\ud b)$, is the same as the process of \eqref{4.5} with full-bang control $u(\cdot)\equiv 1$; 
the optimal stopping region is 
\begin{equation}
\label{4.8a_st}
\D:= \big\{(s,x)\in[0,\infty)\times\mathcal I: \,v(s,x)=0 \big\};
 \end{equation}
and the time 
\begin{equation}
\label{4.8_st}
 \tau^*:=\inf \big\{t\geq 0\,:\, \big(t,\widehat X^{(0,m)}(t)\big)\in\D \big\} \in \mathcal T,
 \end{equation}
of  first entry   into this region, minimizes the expression of \eqref{4.8}\,---\,not only over the collection 
$\mathcal T=\mathcal  S(\mathbb F)$ of stopping times of the filtration $\mathbb F$, but also over
the collection $\mathcal  S(\mathbb G)$ of stopping times of  {\it any} filtration $\mathbb G,$ 
such that the collection  $\big( \Omega, {\cal A} , \mathbb{P}^*), $ $\mathbb G = \big( {\cal G} (t) \big)_{0 \le t < \infty}\,,$ $\big( \widehat X(\cdot), \widehat N(\cdot)\big)$ constitutes a weak solution of the stochastic equation (\ref{4.8a}); cf.\,Definition 5.3.1 in \cite{KS1}. 

Moreover, $\tau^*$ in  \eqref{4.8_st} is the {\it smallest} such stopping time. 
  
Rare examples of prior distributions ${\bm \mu}$, for which this optimal stopping region in \eqref{4.8a_st} can be found explicitly, are
provided in the earlier work \cite{EKV}. Extensions of these examples to the current setting with control, are 
studied in sections \ref{gauss} and \ref{Bernoulli} below.
It has been a major challenge for us to find additional such examples, and we leave this issue to future research.



\subsection{The Problem of Subsection~\ref{sec2.1}}
\label{sec4.1a}

We are ready to state and prove our first result. 

\begin{theorem}
\label{thm4.2}
The pair $(\tau^*,u^*(\cdot))\in\mathcal T\times\U$ as above, namely, $u^*(\cdot)\equiv 1$ 
and $\tau^*$ 
as in \eqref{4.8_st}
(which attains the infimum in \eqref{4.8} over $\tau\in\mathcal T$)
, is optimal for the 
problem of subsection~\ref{sec2.1}.
\end{theorem}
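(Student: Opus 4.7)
The plan is to leverage Proposition~\ref{prop4.1} in order to reduce the combined control-and-stopping problem of subsection~\ref{sec2.1} to the pure optimal stopping problem (\ref{4.8}) for the canonical process $\widehat X(\cdot)$, and then to invoke the ``robustness'' of that optimal stopping value across weak-solution filtrations, as recorded in the comment immediately following (\ref{4.8_st}).

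First, I would use Proposition~\ref{prop4.1} together with (\ref{4.5.5}) to write the cost incurred by any pair $(\tau, u(\cdot)) \in \mathcal T \times \U$ as
\[
\Var(X) + \E^{\P^u}\left[\int_0^\tau \Big(c - \Psi^2\big(A^u(t), \widehat X^u(t)\big)\Big)\, \ud A^u(t)\right].
\]
Performing the change of variable $s = A^u(t)$, and recalling from (\ref{3.22}) that $Q^u(s) = \widehat X^u(T^u(s))$, this becomes
\[
\Var(X) + \E^{\P^u}\left[\int_0^{A^u(\tau)} \Big(c - \Psi^2\big(s, Q^u(s)\big)\Big)\, \ud s\right].
\]

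Next, I would verify that $(Q^u(\cdot), B^u(\cdot))$ under $\P^u$, adapted to $\mathbb H^u$ of (\ref{3.25}), constitutes a weak solution of the SDE (\ref{3.23}) with initial condition $m$; this follows from the Dambis-Dubins-Schwarz construction of $B^u$ in (\ref{3.21}) combined with (\ref{3.22})--(\ref{3.23}). By Proposition~\ref{prop3.2}(i), $A^u(\tau)$ is then a stopping time of $\mathbb H^u$. The robustness property stated after (\ref{4.8_st}) thus yields the lower bound
\[
\E^{\P^u}\left[\int_0^{A^u(\tau)} \Big(c - \Psi^2\big(s, Q^u(s)\big)\Big)\, \ud s\right] \geq v(0, m)\,,
\]
where $v(0, m)$ denotes the infimum in (\ref{4.8}). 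Consequently, the expected cost (\ref{2.6}) associated with every admissible $(\tau, u(\cdot))$ is bounded below by $\Var(X) + v(0, m)$.

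To close the gap, I would specialize to $u^*(\cdot) \equiv 1$: then $A^1(t) = t$, $Q^1 \equiv \widehat X$, and $\mathbb H^1 = \mathbb F$, so choosing $\tau = \tau^*$ from (\ref{4.8_st}) attains $v(0, m)$ by construction; hence the pair $(\tau^*, u^*(\cdot))$ realizes $\Var(X) + v(0, m)$ and is optimal. The main obstacle lies in the weak-solution identification: one must verify carefully that $(Q^u, B^u)$ under $\P^u$, in the time-changed filtration $\mathbb H^u$ (which is generally strictly larger than $\mathbb F^{Q^u}$), is a weak solution of (\ref{3.23}) in the sense of Definition~5.3.1 in \cite{KS1}, so that the robustness statement from \cite{EKV} cited after (\ref{4.8_st}) applies to $A^u(\tau) \in \mathcal S(\mathbb H^u)$ rather than only to stopping times of $\mathbb F^{Q^u}$.
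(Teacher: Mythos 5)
Your proposal is correct and follows essentially the same route as the paper: reduce the cost via Proposition~\ref{prop4.1} and the time change $s=A^u(t)$, use Proposition~\ref{prop3.2} to see that $A^u(\tau)$ is an $\mathbb H^u$-stopping time, and then invoke the sufficiency of $\mathbb F^{Q^u}$-stopping times (equivalently, the robustness of the optimal stopping value across weak-solution filtrations of \eqref{3.23}) together with the $u$-independence of the law of $Q^u(\cdot)$ to obtain the lower bound attained by $(\tau^*, u^*(\cdot)\equiv 1)$. The technical caveat you flag at the end --- that the reduction from $\mathcal S(\mathbb H^u)$ to $\mathcal S(\mathbb F^{Q^u})$ must be justified because $\mathbb H^u$ is generally strictly larger --- is precisely the point the paper handles in the second equality of its display \eqref{4.9}.
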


\begin{proof} 
As above, we denote by $\mathcal S(\mathbb G)$ the collection of stopping times of a 
generic filtration $\mathbb G$; observe that $\mathcal T=\mathcal S(\mathbb F)$; recall the filtration $\,\mathbb H^u= \big(\mathcal H^u(s)\big)_{0\leq s<\infty}\,$ from  
\eqref{3.25}; and note that for every $(\tau,u(\cdot))\in\mathcal T\times\U$  we have 
\begin{eqnarray}
\label{4.9}
\E^{\P^u}\left[\int_0^\tau\left(c-\Psi^2 \big(A^u(t),\widehat X^u(t) \big)\right)\,\ud A^u(t)\right] &=&
\E^{\P^u}\left[\int_0^{A^u(\tau)} \Big( c-\Psi^2 \big(s,Q^u(s)\big)\Big)\ud s\right] ~\\
\notag 
&\geq& 
\inf_{\rho\in \mathcal S(\mathbb H^u)} \E^{\P^u}\left[\int_0^{\rho}\Big( c-\Psi^2 \big(s,Q^u(s)\big)\Big)  \ud s\right]\\
&=& \notag
\inf_{\rho\in \mathcal S(\mathbb F^{Q^u})} \E^{\P^u}\left[\int_0^{\rho} \Big( c-\Psi^2 \big(s,Q^u(s)\big)\Big) \ud s\right]\\
&=& \notag
\inf_{\rho\in \mathcal S(\mathbb F^{Q^1})} \E^{\P^1}\left[\int_0^{\rho} \Big( c-\Psi^2 \big(s,Q^1 (s)\big)\Big)
\ud s\right]\\
&=& \notag
\inf_{\tau\in \mathcal S(\mathbb F^{\widehat X})} \E^{\P^1}\left[\int_0^{\tau}\left( c-\Psi^2 \big(t,\widehat X(t)\big)\right)\ud t\right]\\
&=& \notag
 \E^{\P^*}\left[\int_0^{\tau^*}\left( c-\Psi^2 \big(t,\widehat X(t) \big)\right)\ud t\right],
\end{eqnarray}
always with the identification $\mathbb P^*\equiv \mathbb P^1$.
 
Here, the inequality is a consequence of Proposition~\ref{prop3.2}. The second equality follows from the fact that stopping times of the filtration $\mathcal S(\mathbb F^{Q^u})$ are ``sufficient"
for minimizing the expected cost 
\begin{equation}
\label{ospa}
\E^{\P^u}\left[ \, \int_0^\rho
\Big( c-\Psi^2 \big(s,Q^u(s)\big)\Big)\,
\ud s \, \right]
\end{equation}
over the stopping times $\rho$ of any filtration $\mathbb G^u$, such as $\mathbb H^u= \big(\mathcal H^u(s)\big)_{0\leq s<\infty}$ in \eqref{3.25}, with the property that the collection  $\big( \Omega, {\cal A} , \mathbb{P}^u), $ $\mathbb G^u = \big( {\cal G}^u (t) \big)_{0 \le t < \infty}\,,$ $\big(Q^u (\cdot), B^u (\cdot)\big)\,$  constitutes a weak solution of the stochastic differential equation (\ref{3.23}).


The third equality is a consequence of the fact, noted in the paragraph right before Proposition~\ref{prop3.1}, that the $\P^u-$distribution of the diffusion process $Q^u(\cdot)$ is   the same for all control processes $u(\cdot)\in\U\,$; whereas the fourth and fifth equalities are evident. 

The claimed optimality of the pair $(\tau^*,u^*(\cdot))$ is now clear.
\end{proof}


\subsection{The Problem of Subsection~\ref{sec2.2}}
\label{sec4.1b}

Similar reasoning applies to the problem of subsection~\ref{sec2.2} which, as we noted, amounts to minimizing the expectation in \eqref{4.7}. 

Indeed, for any pair $(\tau,u(\cdot))\in\mathcal T\times\U$, and denoting $c:=h(u_0)/u_0^2\,,$ we have 
\begin{eqnarray*}
\E^{\P^u}\left[\int_0^\tau\left(\frac{h(u(t))}{u^2(t)}-\Psi^2(A^u(t),\widehat X^u(t))\right) \ud A^u(t)\right]
&\geq&
\E^{\P^u}\left[\int_0^{A^u(\tau)} \Big( c-\Psi^2 \big(s,Q^u(s)\big)\Big)\ud s\right]\\
&&\hspace{-40mm}\geq
\inf_{\tau\in \mathcal S(\mathbb F^{\widehat X^{u^{o}}})} 
\E^{\P^{u^{o}}}\left[\int_0^{\tau}\left( c-\Psi^2 \big(A^{u^{o}}(t),\widehat X^{u^{o}}(t)\big)\right)\ud A^{u^{o}}(t)\right]\\
&& \hspace{-40mm} \notag
=
\,\, \E^{\P^{u^{o}}}\left[\int_0^{\tau^{o}}\left( c-\Psi^2 \big(A^{u^{o}}(t),\widehat X^{u^{o}}(t) \big) \right)\ud A^{u^{o}}(t)\right].
\end{eqnarray*}
In the last two expressions of this display,  we   deploy  the constant control  $u^{o}(\cdot)\equiv u_0$ in $\,\U,$ and note $A^{u^{o}}(t) = u_0^2\, t,$ $\widehat X^{u^{o}}(t) = Q^{u^{o}}(u_0^2\, t) $. 

The first inequality in the above display   is a consequence of the assumption \eqref{2.8};  and  the second follows directly from the string \eqref{4.9}, with the constant control  $u^{o}(\cdot)\equiv u_0$ in $\,\U $ replacing the   control  $u^{*}(\cdot)\equiv 1$. 
In the last expression,  $\tau^{o}$ is  the smallest  optimal stopping time for the problem of minimizing the expected cost 
\begin{equation}
\label{osp}
\E^{\P^{u^{o}}}\left[\int_0^{\tau}\left( c-\Psi^2 \big(A^{u^{o}}(t),\widehat X^{u^{o}}(t) \big) \right)\ud A^{u^{o}}(t)\right]
\end{equation}
corresponding to the constant control  $u^{o}(\cdot)\equiv u_0$ in $\,\U $. 

\medskip
These considerations lead to the following result.

\begin{theorem}
\label{thm4.3}
The pair $(\tau^o,u^{o}(\cdot))\in\mathcal T\times \U$, where $u^{o}(\cdot)\equiv u_0$
and $\tau^o$ attains the infimum in \eqref{osp} over $\tau\in\mathcal T$, is optimal for the 
problem of subsection~\ref{sec2.2}. 
\end{theorem}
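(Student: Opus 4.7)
The plan is to argue optimality directly from the chain of inequalities displayed just before the theorem statement, verifying that both inequalities there are tight for the proposed pair $(\tau^o,u^o(\cdot))$. The reformulation (\ref{4.7}) expresses the objective as
\[
\E^{\P^u}\left[\int_0^\tau\left(\frac{h(u(t))}{u^2(t)}-\Psi^2 \big(A^u(t),\widehat X^u(t)\big)\right)\ud A^u(t)\right],
\]
so the two main ingredients are: (i) a pointwise lower bound on the integrand coming from hypothesis (\ref{2.8.5}); and (ii) a reduction of the resulting stopping problem to one driven by a Markov process whose law does not depend on $u(\cdot)$.

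First I would invoke (\ref{2.8.5}) to write $h(u(t))/u^2(t)\geq c:=h(u_0)/u_0^2$ for every $t$, pathwise. Substituting this bound into the objective yields the first inequality of the display before the theorem, and crucially the bound is attained at equality as soon as $u(\cdot)\equiv u_0$.

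Next I would invoke the string (\ref{4.9}) from the proof of Theorem~\ref{thm4.2}, together with Proposition~\ref{prop3.2}, to pass from the original stopping time $\tau\in\mathcal T$ to a stopping time of the filtration $\mathbb H^u$, and then, through the sufficiency of the filtration generated by $Q^u(\cdot)$ (recall that $(Q^u(\cdot),B^u(\cdot))$ is a weak solution of (\ref{3.23})), to the infimum
\[
\inf_{\rho\in\mathcal S(\mathbb F^{Q^u})}\E^{\P^u}\left[\int_0^\rho \Big(c-\Psi^2 \big(s,Q^u(s)\big)\Big)\ud s\right].
\]
Since the law of $Q^u(\cdot)$ does not depend on the choice of $u(\cdot)\in\U$, this infimum equals the analogous quantity with $u(\cdot)\equiv u_0$, namely $\E^{\P^{u^o}}\big[\int_0^{\tau^o}\big(c-\Psi^2(A^{u^o}(t),\widehat X^{u^o}(t))\big)\ud A^{u^o}(t)\big]$, by definition of $\tau^o$ as the infimum in (\ref{osp}).

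Finally I would close the loop by verifying that the pair $(\tau^o,u^o(\cdot))$ attains both lower bounds simultaneously: with $u^o(\cdot)\equiv u_0$ one has $h(u^o(t))/u^2(u^o(t))= c$ identically, so the first inequality is an equality; and the optimality of $\tau^o$ for the problem (\ref{osp}) makes the second inequality an equality as well. Hence $(\tau^o,u^o(\cdot))$ attains the infimum in (\ref{4.7}), proving the theorem. The only step that requires any real care is making sure that the reduction in Proposition~\ref{prop3.2} and in (\ref{4.9}) still applies with $u^o\equiv u_0$ in place of $u^*\equiv 1$; but this is immediate because the argument never used the specific value $1$, only the boundedness and admissibility of the control together with the non-degeneracy condition (\ref{2.1}), both of which $u^o$ satisfies.
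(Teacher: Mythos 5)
Your proposal is correct and follows essentially the same route as the paper: the pointwise bound $h(u)/u^2\geq c:=h(u_0)/u_0^2$ from \eqref{2.8.5}, followed by the time-change/sufficiency reduction of the string \eqref{4.9} (valid for $u^o\equiv u_0$ since the law of $Q^u(\cdot)$ is control-independent), with both inequalities attained by $(\tau^o,u^o(\cdot))$. Your explicit verification that equality holds in both steps, and your remark that \eqref{4.9} never used the specific value $u\equiv 1$, merely spell out what the paper leaves implicit.
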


\begin{remark}
\label{rem4.4}
{\rm 
The optimal stopping problem in \eqref{osp} corresponds to the sequential least-squares estimation of $X$ from observations of a process $Y(t)= u_0Xt + W^0(t),~0 \le t < \infty$ in the manner of \eqref{2.3}, with $W^0 (\cdot)$ a Brownian motion and cost of observation $c$ 
per unit of time. By standard scaling properties, setting $\,W(t)=u_0\,W^0(t/u_0^2),~0 \le t < \infty\,$ yields also a standard Brownian motion, in terms of which 
we have the expression $Y(t)=\frac{1}{u_0}\left(X u_0^2 \,t + W(u_0^2\,t)\right)$.

\smallskip
Least-squares estimation of $X$ using observations of the process $u_0X s + W^0(s),~0 \le s \le t \, $ (up to time $t$), is thus equivalent to least-squares estimation using observations of 
$X \theta + W(\theta),~0 \le \theta \le u_0^2 t  $ (up to time $u_0^2 t)$. Thus, the problem of subsection~\ref{sec2.2} reduces to the problem studied in \cite{EKV}, 
but with cost of observation $cu_0^{-2}$ per unit of time. 
}
\end{remark}

\subsection{Stopping Fast, when the Cost of Control is  Superquadratic}
\label{sec4.2}

Suppose now that, in the context and proof of Theorem~\ref{thm4.2}, as well as of Theorem \ref{thm4.3}   with superquadratic cost of control, we single out  and fix  an arbitrary
control process $\widehat u(\cdot)\in\U$, rather than  $u^*(\cdot)\equiv 1$. 

\smallskip
We can replace then the fifth expression in the string \eqref{4.9} by 
\begin{equation}
\label{4.10}
\inf_{\rho\in \mathcal S(\mathbb F^{Q^{\widehat u}})} \E^{\P^{ \widehat u}}\left[\int_0^{\rho}\Big( c-\Psi^2 \big(s,Q^{\widehat u}(s)\big)\Big) \, \ud s\right] \,=\, \E^{\P^{ \widehat u}}\left[\int_0^{\widehat \rho}\Big( c-\Psi^2 \big(s,Q^{\widehat u}(s)\big)\Big) \, \ud s\right] .
\end{equation}
Here $\widehat \rho\,,$ the smallest optimal stopping time for the problem of maximizing \eqref{ospa} in the context of the $\P^{\,\widehat u}-$diffusion $Q^{\widehat u}(\cdot)$, has under $\P^{\, \widehat u}$ the same distribution
as $\tau^*$ of \eqref{4.8_st} has under $\P^*\equiv\P^1$:
\begin{equation}
\label{4.11}
\P^*(\tau^*>t) \, = \, \P^{\, \widehat u}\big(\widehat\rho>t\big) \, \leq \, \P^{\, \widehat u} \big(\widehat \tau>t\big), \qquad
0\leq t<\infty
\end{equation}
with $\,\widehat\tau:=T^{\widehat u}(\widehat\rho\,)=(A^{\widehat u})^{-1}(\widehat \rho\,)\in\mathbb T$ by Proposition~\ref{prop3.2}.

\smallskip
It follows that we can replace 
then the pair $(\tau^*, u^*(\cdot))$ by
a pair $(\widehat \tau, \widehat u(\cdot))$, for arbitrary $\widehat u(\cdot)\in\U$ and 
$\,\widehat \tau=(A^{\widehat u})^{-1}(\widehat \rho\,)$, with $\widehat \rho$ the optimal stopping time in 
\eqref{4.10}. 

\smallskip
However, the choice $(\tau^*,u^*(\cdot))$ leads to the  {\it ``fastest" 
termination time} possible, in the sense that the stochastic dominance relation
\begin{equation}
\label{4.12}
\P^*(\tau^*>t) \, \leq \, \P^{\, \widehat  u}\, \big(\widehat \tau>t\big), \qquad 0\leq t<\infty
\end{equation}
will hold for  {\it any} such pair $(\widehat \tau, \widehat u(\cdot))\in\mathcal T\times\U$,
as we saw in \eqref{4.11}.

\smallskip
It is noteworthy that the policy of ``full-bang control" $u^*(\cdot)\equiv 1,$ should lead to
a pair $(\tau^*, u^*(\cdot))$ with the optimality properties of both (\ref{4.12}) and of Theorem~\ref{thm4.2} (or  of Theorem 4.3),   
{\it despite} \,the presence of a (super)\,quadratic running cost of control. 

Therefore, in our  context, {\it bold play (``full-bang" control) is   optimal.} This is because it leads to a termination  time which is the earliest possible in the sense of \eqref{4.12}; {\it and the cost-reduction that early termination implies, outweighs the   cost of deploying ``full-bang" control. }

\section{Examples}
\label{sec6}

We present now a couple of examples, of distributions   for which   fairly explicit solutions are possible. 
These  are ramifications of examples discussed in our earlier work \cite{EKV}.

\subsection{The Gaussian Prior Distribution}
\label{gauss}

As a first simple illustration, let us consider the case of a \textsc{Gauss} prior distribution $\bm\mu$ with mean $m\in\R$ and variance $\sigma^2\in(0,\infty)$, i.e.,
\[
\bm\mu(\vd u)=\frac{1}{\sqrt{2\pi \sigma^2}}\, \exp\bigg\{-\frac{(u-m)^2}{2\sigma^2} \bigg\}\, \vd u\,.
\]

We are here, 
in other words, in the very special case of the \textsc{Kalman-Bucy} filter, where the posterior variance of the unobservable drift has deterministic evolution (modulo normalization of the quadratic variation). The time change $  A^u (\cdot)$ is thus natural  and canonical; it summarizes fully the impact of the observations filtration, and allows the separation of stopping decisions from observation costs.

In the present context, the functions $\, F$, $G$, $H$ and $\Psi$ take the very explicit form
$$
F(\theta,\zeta)\,=\, \frac{1}{\sqrt{1+ \sigma^2 \theta}}\, \exp\bigg\{ \,- \frac{1}{2\sigma^2} \, \bigg( \frac{(m + \sigma^2 \zeta)^2}{1+\sigma^2 \theta}- m^2\bigg) \bigg\}
$$
  \begin{equation}
\label{E:GH}
G(\theta,\zeta) \,= \,\frac{m+\sigma^2\zeta}{1+\sigma^2 \theta}\,, \qquad 
H(\theta,\zeta)\,=\, \Psi(\theta,x)\,=\,\frac{\sigma^2}{1+\sigma^2 \theta}\,=\,: {\bm \xi} (\theta)\,.
  \end{equation}
We fix also a continuous and non-decreasing function $h:(0,1]\to(0,\infty)$, such that there exists a number $u_0\in(0,1]$ for which \eqref{2.8.5} holds.

\smallskip
Now define a pair $\,(\tau^o,u^{o}(\cdot))\in\mathcal T\times \U\,$ by setting $u^{o}(\cdot)\equiv u_0$, and taking $\tau^o$ to be the optimal stopping rule for the problem of minimizing
over $\tau\in\mathcal T$ the expression
\begin{eqnarray*}
\E^{\P^{u^{o}}}\left[\int_0^{\tau}\left( c-\Psi^2 \big(A^{u^{o}}(t),\widehat X^{u^{o}}(t) \big) \right)\ud A^{u^{o}}(t)\right] &=& u_0^2  \cdot \E^{\P^{u^{o}}}\left[\int_0^{\tau} \Big(c-{\bm \xi}^2 \big(u_0^2t\big)\Big)\,\ud t\right] 
\end{eqnarray*}
with $c:=h(u_0)/u_0^2$ and the notation in \eqref{E:GH}. Clearly, the integrand 
$\,
c-{\bm \xi}^2 \big(u_0^2t\big)
\,$ 
is negative for $t\in[0,t_0)$ and positive for $t\in[t_0,\infty)$, where 
\[t_0=\frac{1}{u_0^2}\left(\frac{1}{\sqrt c}-\frac{1}{\sigma^2}\right)^+.\]
Consequently, we have $\tau^o=t_0\,$; on the strength of 
Theorem~\ref{thm4.3}, the pair $(\tau^o,u^{o}(\cdot))\in\mathcal T\times \U$ is then a minimizer for the problem in subsection~\ref{sec2.2}.

Also note that, in line with Remark~\ref{rem4.4}, we have $$\tau^o=\frac{1}{u_0^2}\,\tau^*\,,\qquad \text{where} \qquad  \tau^*=\left(\frac{1}{\sqrt c}-\frac{1}{\sigma^2}\right)^+$$ is the smallest optimal stopping time for the problem of least-squares estimation of $X$ given observations $Xt+W(t)$  and with cost of observation $c$ per unit of time, as studied in \cite{EKV}.

Note that $\tau_* =0$, i.e., that it is optimal not to take any observations at all, if observation ``costs too  much", i.e., if $\, c \ge \sigma^4\,.$

\subsection{The Bernoulli Prior Distribution}
\label{Bernoulli}

As a second example, let us consider the \textsc{Bernoulli} prior distribution 
\[\bm\mu\,=\,(1-p)\, \bm \delta_{-\beta}+p \, \bm \delta_{\beta}\]
with symmetric support,
where $p\in(0,1)$ and $\beta\in(0,\infty)$. 
Then
$$
G(\theta, \zeta) \,=\, \beta \, \frac{\, p \, e^{\beta \zeta}- (1-p) \, e^{-\beta y\zeta} \, }{\, p \, e^{\beta \zeta} + (1-p) \, e^{-\beta \zeta} \, }\,, \qquad  \qquad H(\theta,\zeta) \,=\, \beta^2 - G^2 (\theta,\zeta)
$$
and 
\[\Psi(t,x)\,=\,\beta^2-x^2 \,=\,: {\bm \psi} (x)\,.\]
We are here at the opposite extreme, vis-\`a-vis the example in subsection~\ref{gauss}: all these are functions 
of only   the spatial variable. As above, let $h:(0,1]\to(0,\infty)$ be a continuous and non-decreasing function such that there exists a number $u_0\in(0,1]$ for which \eqref{2.8.5} holds,
and let $c:=h(u_0)/u_0^2$.

By Theorem~\ref{thm4.3}, we define a pair $(\tau^o,u^{o}(\cdot))\in\mathcal T\times \U\,$ by setting $u^{o}(\cdot)\equiv u_0$, and take $\tau^o$ to be  the optimal stopping rule for the problem of minimizing
over $\tau\in\mathcal T$ the expression
\begin{eqnarray*}
\E^{\P^{u^{o}}}\left[\int_0^{\tau}\left( c-\Psi^2 \big(A^{u^{o}}(t),\widehat X^{u^{o}}(t) \big) \right)\ud A^{u^{o}}(t)\right] &=& u_0^2\cdot \E^{\P^{u^{o}}}\left[\int_0^{\tau}\left( c-  {\bm \psi}^2 \big(\widehat X^{u^{o}}(t) \big)  \right) \ud t\right],
\end{eqnarray*}
where $\widehat X:=\widehat X^{u^o}$ 
is a time-homogeneous diffusion in natural scale on the interval ${\cal I} = (- \beta, \beta)$, satisfying (cf.\,\eqref{3.19}):
\begin{equation}
\left\{\begin{array}{ll}
\ud \widehat X(t) = {\bm \psi}  \big(\widehat X(t)\big) \,u_0\,\ud N^{u^o}(t)\\
\widehat X(0)=x:=\beta(2p-1)\,.\end{array}\right.
\end{equation}
 Following the arguments of \cite[Section 4]{EKV},   where a Markovian embedding of the above stopping problem is carried out, and concentrating on the case $u_0=1$ for concreteness, it can be shown that 
$\tau^o \equiv 0$ when $\beta^4 \le c\,$ (again,  no observations are obtained at all, if their cost is too high); 
and that otherwise, $\tau^o $   has the form
\[
\tau^o=\inf \big\{t\geq 0:\widehat X(t)\notin(-a,a)\big\}
\]
   for some appropriate constant $a\in \big( \sqrt{ \beta^2 - \sqrt{c\,}\,} ,\beta \big)\,;$ in fact,   the   unique  solution of  the equation
   \begin{equation}
\label{E:FPP}
\int_0^a \frac{\,\ud \xi 
\,}{\,{\bm \psi} ^2 (\xi)\,}\,  \,=\,\frac{\,a\,}{c}\,.
\end{equation}
   
   \medskip
\section{Acknowledgments}

\smallskip
We are greatly indebted to Dr.\,V\'aclav E.  \textsc{Bene\v s} for  formulating, and suggesting to us, this and several related problems. We thank  Dr.\,Donghan \textsc{Kim} for his careful reading of the manuscript and   his comments; the participants at the {\it ``One World Optimal Stopping and Related Topics"} Seminar for their incisive observations and for pointing out relevant literature; and the referees, for their careful reading of our work and for their many and extremely valuable suggestions, which helped us improve the paper very significantly.

%
%
%

 \bigskip

\end{document}